\begin{document}

\makeatletter
\def\revddots{\mathinner{\mkern1mu\raise\p@
  \vbox{\kern7\p@\hbox{.}}\mkern2mu
  \raise4\p@\hbox{.}\mkern2mu\raise7\p@\hbox{.}\mkern1mu}}
\makeatother

\newcommand{\Inn}{\operatorname{Inn}\nolimits}
\newcommand{\Der}{\operatorname{Der}\nolimits}
\newcommand{\Hom}{\operatorname{Hom}\nolimits}
\renewcommand{\Im}{\operatorname{Im}\nolimits}
\newcommand{\Ker}{\operatorname{Ker}\nolimits}
\newcommand{\Coker}{\operatorname{Coker}\nolimits}
\newcommand{\rad}{\operatorname{rad}\nolimits}
\newcommand{\rrad}{\mathfrak{r}}
\newcommand{\Ext}{\operatorname{Ext}\nolimits}
\newcommand{\id}{{\operatorname{id}\nolimits}}
\newcommand{\pd}{{\operatorname{pd}\nolimits}}
\newcommand{\End}{\operatorname{End}\nolimits}
\renewcommand{\L}{\Lambda}
\newcommand{\Y}{{\mathcal Y}}
\renewcommand{\P}{{\mathcal P}}
\newcommand{\HH}{\operatorname{HH}\nolimits}
\newcommand{\mo}{\mathfrak{o}}
\renewcommand{\mod}{\operatorname{mod}\nolimits}
\newcommand{\mt}{\mathfrak{t}}
\newcommand{\NonTip}{\operatorname{NonTip}\nolimits}
\newcommand{\Tip}{\operatorname{Tip}\nolimits}
\newcommand{\tip}{\operatorname{tip}\nolimits}
\newcommand{\Bdy}{\operatorname{Bdy}\nolimits}
\newcommand{\typ}{\operatorname{typ}\nolimits}
\newcommand{\kar}{\operatorname{char}\nolimits}

\newtheorem{lem}{Lemma}[section]
\newtheorem{prop}[lem]{Proposition}
\newtheorem{cor}[lem]{Corollary}
\newtheorem{thm}[lem]{Theorem}
\newtheorem{bit}[lem]{}
\theoremstyle{definition}
\newtheorem{defin}[lem]{Definition}
\newtheorem*{remark}{Remark}
\newtheorem{example}[lem]{Example}




\title[Distinguishing derived equivalence classes \dots]
{Distinguishing derived equivalence classes using the second Hochschild cohomology group}
\author[Al-Kadi]{Deena Al-Kadi}

\address{Deena Al-Kadi\\
Department of Mathematics\\
Taif University\\
Taif\\
Saudia Arabia}

\begin{abstract}
\sloppy In this paper we study the second Hochschild cohomology group
of the preprojective algebra of type $D_4$ over an algebraically closed
field $K$ of characteristic 2. We also calculate the second Hochschild
cohomology group of a non-standard algebra which arises as a socle
deformation of this preprojective algebra and so show that the two algebras
are not derived equivalent. This answers a question raised by Holm and
Skowro\'nski.
\end{abstract}

\date{\today}

\maketitle
\section*{Acknowledgements}
I thank Nicole Snashall for her helpful comments.

\section*{Introduction}

The main work of this paper is in determining the second Hochschild
cohomology group ${\HH}^2(\L)$ for two finite dimensional algebras $\L$ over
a field of characteristic 2 in order to show that they are not derived
equivalent. We let ${\mathcal A}_1$ denote the preprojective algebra of type
$D_4$; this is a standard algebra. We introduce, in Section \ref{ch1}, the
algebra ${\mathcal A}_2$ by quiver and relations; this is a non-standard algebra which is socle
equivalent to ${\mathcal A}_1$, in the case where the underlying field has
characteristic 2. This work is motivated by the question asked by Holm and
Skowro\'nski as to whether or not these two algebras are derived equivalent.

The algebras ${\mathcal A}_1$ and ${\mathcal A}_2$ are selfinjective
algebras of polynomial growth. The main result of this paper (Corollary
\ref{cor}) shows that they are not derived equivalent. This answer to the
question of Holm and Skowro\'nski enabled them to complete their
derived equivalence classification of all symmetric algebras of polynomial growth in \cite{HS}.
We note that \cite{D} showed that the second Hochschild cohomology group
could also be used to distinguish between derived equivalence classes of
standard and non-standard algebras of finite representation type.

Throughout this paper, we let $\L$ denote a finite dimensional algebra over
an algebraically closed field $K$. We start, in Section \ref{ch1}, by giving
the quiver and relations for the two algebras ${\mathcal A}_1$ and ${\mathcal
A}_2$, and recall that we are interested only in the case when $\kar K = 2$.
(We write our paths in a quiver from left to right.)
In Section \ref{ch2}, we give a short description of the projective
resolution of \cite{GS} which we use to find ${\HH}^2(\L)$. The remaining
two sections determine
${\HH}^2(\L)$ for $\L = {\mathcal A}_1, {\mathcal A}_2$. As a consequence,
we show in Corollary \ref{cor} that $\dim\HH^2({\mathcal A}_1) \neq
\dim\HH^2({\mathcal A}_2)$ and hence these two algebras are not derived
equivalent.

\section{The algebras ${\mathcal A}_1$ and ${\mathcal A}_2$ }\label{ch1}

In this section we describe the algebras ${\mathcal A}_1$ and ${\mathcal
A}_2$ by quiver and
relations. We assume that $K$ is an algebraically closed field
and $\kar K$ = 2. The standard algebra ${\mathcal A}_1$ is the
preprojective algebra of type $D_4$, and we note that it was shown in
\cite{ES2} that, in the case when $\kar K \neq 2$, we have
${\HH}^2({\mathcal A}_1) = 0$. We will see that this is
in contrast to the $\kar K = 2$ case.

The algebra ${\mathcal A}_1$ is the given by the quiver ${\mathcal
Q}$:
$$\xymatrix{
& &3  \ar@<1.3 ex>[dd]^{\gamma}&\\
&&&\\
& &4 \ar@<1.3 ex>[dll]^{\beta} \ar@<1.3 ex>[drr]^{\epsilon} \ar@<1.3 ex>[uu]^{\delta}&\\
1 \ar@<1.3 ex>[urr]^{\alpha} &&&& 2 \ar@<1.3 ex>[ull]^{\xi}
}$$
with relations
$$\beta \alpha + \delta \gamma + \epsilon \xi = 0, \gamma \delta = 0, \xi \epsilon = 0
\mbox{ and }
\alpha \beta = 0.$$

The algebra ${\mathcal A}_2$ is the non-standard algebra given by the same
quiver ${\mathcal Q}$ with relations
$$\beta \alpha + \delta \gamma + \epsilon \xi = 0 , \gamma \delta = 0 , \xi \epsilon = 0,
 \alpha \beta \alpha = 0, \beta \alpha \beta = 0 \mbox{ and } \alpha \beta =
\alpha \delta \gamma \beta.$$

\bigskip

We need to find
a minimal set of generators $f^2$ for each algebra. We start with the algebra ${\mathcal A}_2$. The set $\{\alpha \beta - \alpha \delta \gamma \beta, \xi
\epsilon, \gamma \delta, \beta \alpha + \delta \gamma + \epsilon \xi, \alpha \beta \alpha,
\beta \alpha \beta\}$ is not a minimal set of generators for $I$ where ${\mathcal A}_2 = K{\mathcal Q}/
I$. Let $x  = \beta \alpha + \delta 
\gamma + \epsilon \xi$ and let $y = \alpha \beta - \alpha \delta \gamma \beta$. 
We will show that $\alpha\beta\alpha$ is in the ideal generated by $x, y, 
\gamma\delta, \xi\epsilon$. Using that $\kar K = 2$, we have 
$\alpha \beta \alpha = y\alpha + \alpha \delta \gamma \beta \alpha
= y\alpha + \alpha x\beta\alpha + \alpha(\beta \alpha+ \epsilon\xi)\beta\alpha
= y\alpha + \alpha x\beta\alpha + \alpha \beta \alpha \beta \alpha +
\alpha\epsilon\xi x + \alpha \epsilon \xi (\delta \gamma + \epsilon \xi) 
= y\alpha + \alpha x\beta\alpha + \alpha\epsilon\xi x + 
\alpha \beta \alpha \beta \alpha + \alpha x\delta\gamma +
\alpha (\beta \alpha + \delta \gamma) \delta \gamma + \alpha \epsilon \xi \epsilon \xi
= y\alpha + \alpha x\beta\alpha + \alpha\epsilon\xi x + \alpha x\delta\gamma +
\alpha \epsilon \xi \epsilon \xi + \alpha \beta \alpha \beta \alpha +
\alpha\beta\alpha x + \alpha \beta \alpha (\beta \alpha + \epsilon \xi) + 
\alpha \delta \gamma \delta \gamma
= y\alpha + \alpha x\beta\alpha + \alpha\epsilon\xi x + \alpha x\delta\gamma +
\alpha \epsilon \xi \epsilon \xi + \alpha\beta\alpha x + 
\alpha \beta \alpha \epsilon \xi + \alpha \delta \gamma \delta \gamma$.
However, $\alpha \beta \alpha \epsilon \xi = 
y\alpha\epsilon\xi + \alpha \delta \gamma \beta \alpha \epsilon \xi 
= y\alpha\epsilon\xi + \alpha \delta \gamma x \epsilon \xi 
+ \alpha \delta \gamma (\delta \gamma + \epsilon \xi) \epsilon \xi$.
Thus $\alpha \beta \alpha$ is in the ideal generated by
$x, y, \gamma\delta, \xi\epsilon$. 
Using a similar argument for $\beta\alpha\beta$, we have that 
$I$ is generated by the set
$\{\alpha \beta - \alpha \delta \gamma \beta, \xi \epsilon, \gamma \delta, \beta \alpha + \delta \gamma + \epsilon \xi\}$.
This gives the following result.

\begin{prop}\label{prop1}
For ${\mathcal A}_2$ let
$$f^2_1 = \alpha \beta - \alpha \delta \gamma \beta,$$
$$f^2_2 = \xi \epsilon, \hspace{1.5cm}$$
$$f^2_3 = \gamma \delta, \hspace{1.5cm}$$
$$f^3_4 = \beta \alpha + \delta \gamma + \epsilon \xi.$$
Then $f^2 = \{f^2_1, f^2_2, f^2_3, f^2_4\}$ is a minimal set of generators of $I$ where
${\mathcal A}_2 = K{\mathcal Q} /I$.
\end{prop}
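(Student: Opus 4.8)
The preceding computation already establishes that the four elements $f^2_1,\dots,f^2_4$ generate $I$, so the only thing left to prove is that this generating set is \emph{minimal}; that is, that no proper subset of $\{f^2_1,\dots,f^2_4\}$ generates $I$. My plan is to invoke the standard criterion that a generating set of an admissible ideal $I$ in $K{\mathcal Q}$ is minimal precisely when its elements are linearly independent modulo $\rrad I + I\rrad$, where $\rrad$ denotes the arrow ideal of $K{\mathcal Q}$. Equivalently, the images of $f^2_1,\dots,f^2_4$ must form a $K$-basis of $I/(\rrad I + I\rrad)$; since they already span this space (being a generating set), it suffices to verify their linear independence.

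The key observation is a degree estimate. Every one of the four generators has its lowest-degree term of length $2$ (the length-$4$ summand $\alpha\delta\gamma\beta$ of $f^2_1$ being the only higher term), so $I \subseteq \rrad^2$ and therefore $\rrad I + I\rrad \subseteq \rrad^3$. I would then consider the $K$-linear projection $\pi\colon I \to \rrad^2/\rrad^3$ obtained by retaining only the length-$2$ component of an element. Because $\rrad I + I\rrad$ lands in $\rrad^3$, it lies in $\Ker\pi$, so $\pi$ factors through $I/(\rrad I + I\rrad)$, and it is enough to show that $\pi(f^2_1),\dots,\pi(f^2_4)$ are linearly independent in $\rrad^2/\rrad^3$.

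Computing these images is immediate, as the length-$4$ tail of $f^2_1$ dies under $\pi$:
$$\pi(f^2_1)=\overline{\alpha\beta},\quad \pi(f^2_2)=\overline{\xi\epsilon},\quad \pi(f^2_3)=\overline{\gamma\delta},\quad \pi(f^2_4)=\overline{\beta\alpha}+\overline{\delta\gamma}+\overline{\epsilon\xi},$$
where the bar denotes the class in $\rrad^2/\rrad^3$. Since $\rrad^2/\rrad^3$ has the set of length-$2$ paths as a $K$-basis, and since $\alpha\beta$, $\xi\epsilon$, $\gamma\delta$ and the triple $\beta\alpha,\delta\gamma,\epsilon\xi$ are cycles at the four distinct vertices $1,2,3,4$ respectively and hence involve pairwise disjoint basis paths, these four images are manifestly linearly independent. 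Thus any relation $\sum_i\lambda_i f^2_i \in \rrad I + I\rrad$ forces $\sum_i\lambda_i\pi(f^2_i)=0$ and so $\lambda_1=\cdots=\lambda_4=0$, giving minimality.

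I do not expect a serious obstacle here: the entire difficulty of the proposition has in fact already been discharged in the computation preceding it, namely the verification that $\alpha\beta\alpha$ and $\beta\alpha\beta$ lie in the ideal generated by $x,y,\gamma\delta,\xi\epsilon$, which is what reduces the original six relations to four. The only mild subtlety in the minimality argument itself is that $I$ is not homogeneous, so one cannot simply pass to an associated graded ideal and read off leading terms; this is precisely why I work with the filtration-induced projection onto $\rrad^2/\rrad^3$, which handles the inhomogeneous generator $f^2_1$ correctly by discarding its degree-$4$ part.
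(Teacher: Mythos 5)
Your proposal is correct, and it in fact supplies something the paper itself does not: the paper's entire proof of this proposition is the computation displayed just before it (showing that $\alpha\beta\alpha$ and $\beta\alpha\beta$ lie in the ideal generated by $f^2_1,\dots,f^2_4$), after which minimality is asserted without argument. You rightly defer to that computation for generation, and then actually prove minimality via the truncation map $\pi\colon I\to\rrad^2/\rrad^3$: since $I\subseteq\rrad^2$ one has $\rrad I+I\rrad\subseteq\rrad^3$, so $\pi$ factors through $I/(\rrad I+I\rrad)$, and the images $\overline{\alpha\beta}$, $\overline{\xi\epsilon}$, $\overline{\gamma\delta}$, $\overline{\beta\alpha}+\overline{\delta\gamma}+\overline{\epsilon\xi}$ involve six pairwise distinct length-two paths, hence are linearly independent; any proper generating subset would then give a spanning set of fewer than four elements of the four-dimensional space $I/(\rrad I+I\rrad)$, which is impossible. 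Two points of care, neither fatal. First, the step ``a generating set has images spanning $I/(\rrad I+I\rrad)$'' uses that the $f^2_i$ are uniform (so that $e_v f^2_i e_w$ is a scalar multiple of $f^2_i$); this holds here, each $f^2_i$ being supported at a single pair of vertices. Second, the criterion you quote is not an equivalence in the ordinary (non-completed) path algebra: a minimal generating set of an admissible ideal can have linearly dependent images modulo $\rrad I+I\rrad$. For the one-loop quiver, with $I=(x^2)\subseteq K[x]$, the set $\{x^2-x^3,\ x^3\}$ is a minimal generating set whose images modulo $\rrad I+I\rrad=(x^3)$ are dependent; the obstruction is that $1-x$ is not invertible in $K[x]$, so the naive Nakayama-type reduction fails. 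Only the direction you actually use --- a generating set of uniform elements whose images are linearly independent is minimal --- is valid in this setting, and that direction is all your argument needs, so the proof stands; but the criterion should be stated one-directionally (or one should pass to the completed path algebra).
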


We now consider the algebra ${\mathcal A}_1$.

\begin{prop}\label{prop2}
For ${\mathcal A}_1$ let
$$f^2_1 = \alpha \beta,\hspace{1.5cm}$$
$$f^2_2 = \xi \epsilon, \hspace{1.5cm}$$
$$f^2_3 = \gamma \delta, \hspace{1.5cm}$$
$$f^3_4 = \beta \alpha + \delta \gamma + \epsilon \xi.$$
Then $f^2 = \{f^2_1, f^2_2, f^2_3, f^2_4\}$ is a minimal set of generators for $I'$ where
${\mathcal A}_1 = K{\mathcal Q} /I'$.
\end{prop}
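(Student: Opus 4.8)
The plan is to note first that, in contrast with Proposition~\ref{prop1}, no reduction of the generating set is needed here. By construction $I'$ is the ideal of $K{\mathcal Q}$ generated by precisely the four relations $\alpha\beta$, $\xi\epsilon$, $\gamma\delta$ and $\beta\alpha+\delta\gamma+\epsilon\xi$ defining ${\mathcal A}_1$, so $\{f^2_1,f^2_2,f^2_3,f^2_4\}$ generates $I'$ by definition. All the content therefore lies in proving minimality.

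For minimality I would invoke the standard criterion: a set of uniform elements generating $I'$ is a minimal generating set exactly when its image is $K$-linearly independent in $I'/(JI'+I'J)$, where $J$ denotes the arrow ideal of $K{\mathcal Q}$, that is, the ideal generated by $\alpha,\beta,\gamma,\delta,\epsilon,\xi$. The key simplification is that each of the four relations is homogeneous of degree $2$ in the path-length grading on $K{\mathcal Q}$; hence $I'$ is a graded (indeed quadratic) ideal and $JI'+I'J$ is concentrated in path-lengths $\geq 3$. It follows that the degree-$2$ part of $I'/(JI'+I'J)$ is just the $K$-span of $f^2_1,\dots,f^2_4$ inside the length-$2$ component of $K{\mathcal Q}$, so minimality reduces to the linear independence of these four elements among the length-$2$ paths.

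That independence is immediate, and this is the step I expect to present most carefully. The three elements $f^2_1=\alpha\beta$, $f^2_2=\xi\epsilon$ and $f^2_3=\gamma\delta$ are distinct single length-$2$ paths (lying in the corners $e_1K{\mathcal Q}e_1$, $e_2K{\mathcal Q}e_2$ and $e_3K{\mathcal Q}e_3$ respectively), while $f^2_4=\beta\alpha+\delta\gamma+\epsilon\xi$ is supported on three further length-$2$ paths in $e_4K{\mathcal Q}e_4$, disjoint from the first three; as these six paths are distinct members of the path basis, no nontrivial $K$-combination of the $f^2_i$ can vanish. Equivalently, and most cleanly, the four relations lie in four different corners $e_iK{\mathcal Q}e_i$, so any relation $\sum_i c_i f^2_i\in JI'+I'J$ projects in the corner $e_jK{\mathcal Q}e_j$ to $c_j f^2_j$, which must then have path-length $\geq 3$; since $f^2_j$ is nonzero in length $2$ this forces $c_j=0$ for each $j$. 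Hence the four elements are independent modulo $JI'+I'J$ and form a minimal generating set. I anticipate no genuine obstacle here: the degree-$2$ homogeneity of the relations makes the argument transparent, which is exactly why ${\mathcal A}_1$ requires none of the reduction carried out for ${\mathcal A}_2$.
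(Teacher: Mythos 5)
Your proposal is correct, but note that the paper itself offers no proof at all of Proposition~\ref{prop2}: in contrast to Proposition~\ref{prop1}, where a reduction argument is needed to eliminate the cubic relations $\alpha\beta\alpha$ and $\beta\alpha\beta$, the statement for ${\mathcal A}_1$ is simply asserted, with minimality implicitly regarded as obvious because the four defining relations are quadratic. What you have written is a complete justification of exactly that implicit reasoning: generation holds by definition of $I'$, and minimality follows from the standard criterion that a generating set of uniform elements is minimal precisely when its image is linearly independent in $I'/(JI'+I'J)$, combined with the observation that $I'$ is graded by path length with all generators in degree $2$, so that $JI'+I'J$ sits in degrees $\geq 3$ and independence reduces to independence of the six distinct length-$2$ paths $\alpha\beta$, $\xi\epsilon$, $\gamma\delta$, $\beta\alpha$, $\delta\gamma$, $\epsilon\xi$ in the path basis. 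Your corner-by-corner refinement (projecting onto $e_jK{\mathcal Q}e_j$) is also sound, since the four relations lie in the four distinct corners $e_iK{\mathcal Q}e_i$. In short, where the paper relies on the reader to see this as immediate, you have supplied the missing argument; the only caution is that the minimality criterion you invoke should be cited (it is the one underlying \cite{GS,GSZ}, where the cardinality of a minimal uniform generating set is pinned down by $\operatorname{Ext}^2_{\Lambda}(\Lambda/\mathfrak{r},\Lambda/\mathfrak{r})$) rather than treated as self-evident.
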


\section{The Projective resolution}\label{ch2}

To find the Hochschild cohomology groups for any finite dimensional algebra
$\L$, a projective resolution of $\L$ as a $\L, \L$-bimodule is needed. In
this section we look at the projective resolutions of \cite{GS} and \cite
{GSZ} in order to describe the second Hochschild cohomology group. Let $K$
be a field and let $\L =K {\mathcal Q}$/$I$ be a finite dimensional algebra
where ${\mathcal Q}$ is a quiver,
and $I$ is an admissible ideal of $K {\mathcal Q}$.
Fix a minimal set $f^2$ of generators for the ideal $I$. 
For any $x \in f^2$, we may write $x =
\sum_{j=1}^{r} c_ja_{1j} \cdots a_{kj} \cdots a_{s_jj}$, where the $a_{ij}$ are
arrows in ${\mathcal Q}$ and $c_j \in K$, that is, $x$ is a
linear combination of paths $a_{1j} \cdots a_{kj} \cdots a_{s_jj}$ for $j =
1, \ldots, r$. We may assume that there are (unique) vertices $v$ and $w$ such
that each path $a_{1j} \cdots a_{kj} \cdots a_{s_j j}$ starts at $v$ and
ends at $w$ for all $j$, so that $x = vxw$. We write $\mo(x) = v$ and $\mt(x) = w.$ Similarly
$\mo(a)$ is the origin of the arrow $a$ and $\mt(a)$ is the terminus of $a$.

In \cite[Theorem 2.9]{GS}, the first 4 terms of a minimal projective
resolution of $\L$ as a $\L,
\L$-bimodule are described:
$$\cdots \rightarrow Q^3 \stackrel{A_3}{\rightarrow} Q^2
\stackrel{A_2}{\rightarrow} Q^1 \stackrel{A_1}{\rightarrow} Q^0
\stackrel{g}{\rightarrow} \L \rightarrow 0.$$
The projective $\L, \L$-bimodules $Q^0, Q^1, Q^2$ are given by
$$Q^0 = \bigoplus_{v, vertex} \L v \otimes v\L,$$
$$Q^1 = \bigoplus_{a, arrow} \L \mo(a) \otimes \mt(a)\L, \mbox { and }$$
$$Q^2 = \bigoplus_{x \in f^2} \L \mo(x) \otimes \mt(x) \L.$$
Throughout, all tensor products are over $K$, and we write
$\otimes$ for $\otimes_K$. The maps $g, A_1$, $A_2$ and $A_3$ are
all $\L, \L$-bimodule homomorphisms.
The map $g: Q^0 \rightarrow \L$ is the multiplication map so is given by  $v
\otimes v \mapsto v$. The map $A_1: Q^1\rightarrow Q^0$ is given by $\mo(a)
\otimes \mt(a) \mapsto \mo(a) \otimes \mo(a) a - a \mt(a) \otimes \mt(a)$
for each arrow $a$.
With the notation for $x \in f^2$ given above, the map $A_2: Q^2 \rightarrow
Q^1$ is given by $\mo(x) \otimes \mt(x) \mapsto
\sum_{j=1}^{r}c_j(\sum_{k=1}^{s_j} a_{1j} \cdots a_{(k-1)j} \otimes
a_{(k+1)j} \cdots a_{s_j j})$, where $a_{1j} \cdots a_{(k-1)j} \otimes
a_{(k+1)j} \cdots a_{s_j j}$ $\in \L \mo(a_{kj}) \otimes \mt(a_{kj})\L$.

In order to describe the projective $Q^3$ and the map $A_3$
in the $\L, \L$-bimodule resolution of $\L$ in \cite{GS}, we need to
introduce some notation from \cite{GSZ}. Recall that an
element $y \in K{\mathcal Q}$ is uniform if there are vertices $v, w$ such
that $y = v y = y w.$ We write $\mo(y) = v$ and $\mt(y) = w$. In \cite{GSZ}, Green,
Solberg and Zacharia show that there are sets $f^n$ in $K{\mathcal Q}$, 
for $n \geq 3$, consisting of uniform
elements $y \in f^n$ such that $y = \sum_{x \in f^{n-1}} x r_x = \sum_{z \in
f^{n-2}} z s_z$ for unique elements $r_x, s_z \in K{\mathcal Q}$ such that
$s_z \in I$. These sets have
special properties related to a minimal projective $\L$-resolution of
$\L/\rrad$, where $\rrad$ is the Jacobson radical of $\L$. Specifically
the $n$-th projective in the minimal projective $\L$-resolution of $\L/\rrad$
is $\bigoplus_{y \in f^n} \mt(y) \L.$

In particular, for $y \in f^3$
we may write $y = \sum f^2_i p_i = \sum q_i f^2_i r_i$ with $p_i, q_i, r_i \in
K{\mathcal Q}$, $p_i, q_i$ in the ideal generated by the arrows of
$K{\mathcal Q}$, and $p_i$ unique. Then \cite{GS} gives that $Q^3 = \bigoplus_{y \in f^3} \L \mo(y) \otimes
\mt(y) \L$ and, for $y \in f^3$ in the notation
above, the component of $A_3 (\mo(y) \otimes \mt(y))$ in the summand $\L
\mo(f_i^2) \otimes \mt(f_i^2) \L$ of $Q^2$ is $\mo(y) \otimes p_i -
q_i \otimes r_i.$

\bigskip

Given this part of the minimal projective $\L, \L$-bimodule
resolution of $\L$
$$Q^3 \stackrel{A_3}{\rightarrow} Q^2 \stackrel{A_2}{\rightarrow} Q^1
\stackrel{A_1}{\rightarrow} Q^0 \stackrel{g}{\rightarrow} \L \rightarrow
0$$
we apply ${\Hom}(-, \L)$ to give the complex
$$0 \rightarrow {\Hom}(Q^0, \L) \stackrel{d_1}{\rightarrow} {\Hom}(Q^1, \L)
\stackrel{d_2}{\rightarrow} {\Hom}(Q^2, \L) \stackrel{d_3}{\rightarrow}
{\Hom}(Q^3, \L)$$
where $d_i$ is the map induced from $A_i$ for $i = 1, 2, 3$. Then
${\HH}^2(\L) = {\Ker}\,d_3/{\Im}\,d_2.$

\bigskip

 When considering an element of the projective
$\L, \L$-bimodule $Q^1 = \bigoplus_{a, arrow} \L \mo(a) \otimes \mt(a) \L$
it is important to keep track of the individual summands of $Q^1$. So to
avoid confusion we usually denote an element in the summand $\L \mo(a)
\otimes \mt(a) \L$ by $\lambda \otimes_a \lambda'$ using the subscript `$a$'
to remind us in which summand this element lies. Similarly, an element
$\lambda \otimes_{f^2_i} \lambda'$ lies in the summand $\L \mo(f^2_i)
\otimes \mt(f^2_i) \L$ of $Q^2$ and an element $\lambda \otimes_{f^3_i}
\lambda'$ lies in the summand $\L \mo(f^3_i) \otimes \mt(f^3_i) \L$ of
$Q^3$. We keep this notation for the rest of the paper.

\bigskip

Now we are ready to compute ${\HH}^2(\L)$ for the finite dimensional algebras
${\mathcal A}_1$ and ${\mathcal A}_2$.

\section{${\HH}^2({\mathcal A}_2)$}\label{ch3}

In this section we determine ${\HH}^2({\mathcal A}_2)$ for the non-standard
algebra ${\mathcal A}_2$.

\begin{thm}
For the non-standard algebra ${\mathcal A}_2$ with $\kar K = 2$, we have $\dim\,{\HH}^2({\mathcal A}_2)= 4.$
\end{thm}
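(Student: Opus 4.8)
The plan is to compute $\HH^2(\mathcal{A}_2) = \Ker d_3 / \Im d_2$ directly from the explicit complex $\Hom(Q^1,\L) \xrightarrow{d_2} \Hom(Q^2,\L) \xrightarrow{d_3} \Hom(Q^3,\L)$ described in Section~\ref{ch2}, using the minimal generators $f^2 = \{f^2_1, f^2_2, f^2_3, f^2_4\}$ from Proposition~\ref{prop1}. First I would identify the elements of $f^3$. Since $Q^3 = \bigoplus_{y \in f^3} \L\mo(y)\otimes\mt(y)\L$ and the $f^3$ are the uniform overlap relations expressing each $y$ both as $\sum f^2_i p_i$ and as $\sum q_i f^2_i r_i$, the first genuine task is to find all such uniform elements $y$ arising from overlaps among $f^2_1, \dots, f^2_4$. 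I expect these to come from overlaps like $\alpha\beta\alpha$, $\beta\alpha\beta$, $\gamma\delta\gamma$, $\xi\epsilon\xi$, and the longer relations forced by $f^2_4 = \beta\alpha + \delta\gamma + \epsilon\xi$ interacting with $f^2_1 = \alpha\beta - \alpha\delta\gamma\beta$; pinning down this set $f^3$ precisely, together with the coefficients $p_i, q_i, r_i$, is the crucial bookkeeping step.

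Once $f^3$ and the maps $A_2, A_3$ are explicit, I would describe a general homomorphism $\theta \in \Hom(Q^2, \L)$. Since each summand of $Q^2$ is $\L\mo(f^2_i)\otimes\mt(f^2_i)\L$, such a $\theta$ is determined by choosing, for each $i$, an element $\theta(\mo(f^2_i)\otimes\mt(f^2_i)) \in \mo(f^2_i)\L\mt(f^2_i)$, i.e. a $K$-linear combination of paths from $\mo(f^2_i)$ to $\mt(f^2_i)$. I would list a $K$-basis of each such space $\mo(f^2_i)\L\mt(f^2_i)$ of parallel paths, which requires knowing the finite-dimensional structure of $\mathcal{A}_2$ well enough to enumerate the nonzero paths between the relevant vertices. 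This gives a finite-dimensional parameter space for $\Hom(Q^2,\L)$.

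Next I would compute $\Ker d_3$ by imposing, for each $y \in f^3$, that $d_3(\theta)(\mo(y)\otimes\mt(y)) = 0$; by the formula for $A_3$, this means $\sum_i \bigl(\mo(y)\,\theta_i\,p_i - q_i\,\theta_i\,r_i\bigr) = 0$ in $\L$, where $\theta_i = \theta(\mo(f^2_i)\otimes\mt(f^2_i))$. Each such condition is a linear relation on the path-coefficients parametrizing $\theta$, so $\Ker d_3$ is cut out as the solution space of a linear system. Separately I would compute $\Im d_2$: a homomorphism in $\Hom(Q^1,\L)$ is given by choosing $\psi_a \in \mo(a)\L\mt(a)$ for each arrow $a$, and $d_2(\psi)$ evaluated on $\mo(f^2_i)\otimes\mt(f^2_i)$ is read off from the formula for $A_2$ applied to $f^2_i$. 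I would then determine the dimension of the image and subtract.

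The main obstacle I anticipate is twofold and essentially computational: first, correctly determining the set $f^3$ and the associated data $p_i, q_i, r_i$, since the relation $f^2_1 = \alpha\beta - \alpha\delta\gamma\beta$ has a degree-$4$ term that complicates the overlap analysis and the characteristic-$2$ cancellations (as already seen in the reduction preceding Proposition~\ref{prop1}) make sign/coefficient tracking delicate; and second, enumerating bases for the parallel-path spaces $\mo(f^2_i)\L\mt(f^2_i)$ in the non-standard algebra, where the deformed relation $\alpha\beta = \alpha\delta\gamma\beta$ changes which paths are nonzero compared to $\mathcal{A}_1$. Everything after that is linear algebra over $K$: assembling the matrices of $d_2$ and $d_3$, computing $\dim\Ker d_3$ and $\dim\Im d_2$, and concluding that their difference equals $4$. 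I expect the contrast with the forthcoming computation for $\mathcal{A}_1$ to hinge precisely on how the extra term in $f^2_1$ enlarges or shrinks these spaces.
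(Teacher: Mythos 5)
Your outline follows exactly the paper's strategy (the Green--Snashall bimodule resolution, then $\HH^2(\mathcal{A}_2)=\Ker d_3/\Im d_2$), but as written it is a plan rather than a proof: every step carrying actual mathematical content is deferred. You never determine the set $f^3$, never exhibit bases for the parallel-path spaces $\mo(f^2_i)\L\mt(f^2_i)$, never compute $d_2$ or $d_3$ on a general homomorphism, and never produce the two numbers whose difference is the answer (in the paper, $\dim\Im d_2=5$ and $\dim\Ker d_3=9$). Since the theorem \emph{is} this numerical computation, acknowledging that the computation remains to be done does not establish $\dim\HH^2(\mathcal{A}_2)=4$; nothing in the proposal yet distinguishes $4$ from any other value, nor $\mathcal{A}_2$ from $\mathcal{A}_1$.

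Moreover, the one concrete guess you do make about the ``crucial bookkeeping step'' would lead you astray. You expect $f^3$ to arise from overlaps such as $\alpha\beta\alpha$, $\beta\alpha\beta$, $\gamma\delta\gamma$, $\xi\epsilon\xi$. By the Green--Solberg--Zacharia condition, an element $y\in f^3$ must be expressible both as $\sum_{x\in f^2}x r_x$ \emph{and} as $\sum_a a s_a$ over arrows $a$ with each $s_a\in I$. For $y=\alpha\beta\alpha$ the only expression of the second kind is $\alpha\cdot(\beta\alpha)$, and $\beta\alpha\notin I$ (only the sum $\beta\alpha+\delta\gamma+\epsilon\xi$ lies in $I$); likewise $\gamma\delta\gamma=\gamma\cdot(\delta\gamma)$ with $\delta\gamma\notin I$, $\xi\epsilon\xi=\xi\cdot(\epsilon\xi)$ with $\epsilon\xi\notin I$, and $\beta\alpha\beta=\beta\cdot(\alpha\beta)$ with $\alpha\beta\notin I$ for $\mathcal{A}_2$. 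So none of these degree-3 overlaps qualifies. The correct $f^3$ consists of four longer uniform elements, one based at each vertex, for instance $f^3_1=f^2_1\alpha\delta\gamma\beta+f^2_1\alpha\beta$, and verifying their two required expressions is where the real work lies: the number and placement of these elements fixes $\Hom(Q^3,\L)$, and the characteristic-$2$ cancellations make three of the four kernel conditions vacuous, leaving only $c_7=c_3+c_5$ and hence $\dim\Ker d_3=9$. With an incorrect $f^3$ the linear algebra you describe would produce the wrong dimension, so this gap is not merely one of presentation.
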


\begin{proof}
The set $f^2$ of minimal relations was given in Proposition \ref{prop1}.

Following \cite{GS, GSZ}, we may choose the set $f^3$ to consist of the
following elements:
$$\{f^3_1, f^3_2,f^3_3, f^3_4\},\mbox{ where }$$
$$\begin{array}{l c l l l l l l}
f^3_1 &=& f^2_1 \alpha \delta \gamma \beta + f^2_1 \alpha \beta \\
      &=& \alpha \delta \gamma \beta f^2_1 + \alpha \beta f^2_1 \in e_1 K{\mathcal Q}e_1, \\

f^3_2 &=& f^2_2 \xi \delta \gamma \epsilon + f^2_2 \xi \beta \alpha \epsilon  \\
      &=& \xi f^2_4 \beta \alpha \epsilon + \xi f^2_4 \delta \gamma \epsilon
      + \xi \delta \gamma f^2_4 \epsilon + \xi \beta \alpha f^2_4 \epsilon
      + \xi \delta \gamma \epsilon f^2_2 + \xi \beta \alpha \epsilon f^2_2 \in e_2 K{\mathcal Q}e_2, \\

f^3_3 &=& f^2_3 \gamma \beta \alpha \delta + f^2_3 \gamma \epsilon \xi \delta \\
      &=& \gamma f^2_4 \epsilon \xi \delta + \gamma f^2_4 \beta \alpha \delta
      + \gamma \beta \alpha f^2_4 \delta + \gamma \epsilon \xi f^2_4 \delta
      + \gamma \beta \alpha \delta f^2_3 + \gamma \epsilon \xi \delta f^2_3 \in e_3 K{\mathcal Q}e_3, \\

f^3_4 &=& f^2_4 \beta \alpha \delta \gamma + f^2_4 \epsilon \xi \delta \gamma \\
      &=& \epsilon f^2_2 \xi \delta \gamma + \delta f^2_3 \gamma \beta \alpha
      + \delta f^2_3 \gamma \epsilon \xi + \delta \gamma f^2_4 \beta \alpha
      + \delta \gamma f^2_4 \epsilon \xi \\
 &+& \beta \alpha f^2_4 \delta \gamma + \beta \alpha \delta f^2_3 \gamma
      + \delta \gamma \epsilon \xi f^2_4 + \delta \gamma \beta \alpha f^2_4 \in e_4 K{\mathcal Q}e_4.\\
\end{array}$$

Thus (writing $\Lambda$ for ${\mathcal A}_2$) the projective $Q^3 = \bigoplus_{y \in f^3} \L \mo(y) \otimes \mt(y) \L$
$= (\L e_1 \otimes e_1\L) \oplus (\L  e_2 \otimes e_2\L) \oplus
 (\L e_3 \otimes e_3\L) \oplus (\L  e_4\otimes e_4 \L) $.
We know that ${\HH}^2(\L) = {\Ker}\,d_3 / {\Im}\,d_2$. First we will find
${\Im}\,d_2$. Let $f \in {\Hom}(Q^1, \L)$ and so write
$$f(e_1 \otimes_{\alpha} e_4) = c_1 \alpha + c_2 \alpha \delta \gamma, \hspace{1cm} f(e_4
\otimes_{\beta} e_1) = c_3 \beta + c_4 \delta \gamma \beta,$$
$$f(e_3 \otimes_{\gamma} e_4) = c_5 \gamma + c_6 \gamma \beta \alpha, \hspace{1cm} f(e_4 \otimes_{\delta} e_3) = c_7 \delta + c_8 \beta \alpha \delta,$$
$$f(e_4 \otimes_{\epsilon} e_2) = c_9 \epsilon + c_{10} \delta \gamma \epsilon
\hspace{1cm} \mbox{and }
f(e_2 \otimes_{\xi} e_4) = c_{11} \xi + c_{12} \xi \delta \gamma,$$
where $c_1, c_2, c_3, c_4, \ldots, c_{12} \in K.$
Now we find $fA_2 = d_2f$. We have
$fA_2(e_1 \otimes_{f^2_1} e_1) = f(e_1 \otimes_{\alpha} e_4)\beta $+$ \alpha f(e_4 \otimes_{\beta} e_1)
-  f(e_1 \otimes_{\alpha} e_4)\delta \gamma \beta
- \alpha f(e_4 \otimes_{\delta} e_3) \gamma \beta - \alpha \delta f(e_3 \otimes_{\gamma} e_4) \beta
- \alpha \delta \gamma f(e_4 \otimes_{\beta} e_1) = c_1 \alpha \beta + c_2 \alpha \delta \gamma \beta
+ c_3 \alpha \beta + c_4 \alpha \delta \gamma \beta - c_1 \alpha \delta \gamma \beta
- c_7 \alpha \delta \gamma \beta - c_5 \alpha \delta \gamma \beta - c_3 \alpha \delta \gamma \beta
= (c_1 + c_2 + c_3 + c_4 - c_1 -c_7 - c_5 - c_3) \alpha \beta
= (c_2 + c_4 + c_7 + c_5) \alpha \beta$.

Also $fA_2(e_2 \otimes_{f^2_2} e_2) = f(e_2 \otimes_{\xi} e_4)\epsilon + \xi f(e_4 \otimes_{\epsilon} e_2)
= (c_{12} + c_{10}) \xi \delta \gamma \epsilon$.

We have $fA_2(e_3 \otimes_{f^2_3} e_3) = f(e_3 \otimes_{\gamma} e_4)\delta + \gamma f(e_4 \otimes_{\delta} e_3)
= (c_6 + c_8) \gamma \beta \alpha \delta.$

And $fA_2(e_4 \otimes_{f^2_4} e_4) = f(e_4 \otimes_{\beta} e_1)\alpha + f(e_4 \otimes_{\delta} e_3) \gamma
+ f(e_2 \otimes_{\epsilon} e_4) \xi + \beta f(e_1 \otimes_{\alpha} e_4) + \delta f(e_3 \otimes_{\gamma} e_4)
+ \epsilon f(e_2 \otimes_{\xi} e_4) $= $ c_3 \beta \alpha + c_4 \delta \gamma \beta \alpha + c_7 \delta \gamma
+ c_8 \beta \alpha \delta \gamma + c_9 \epsilon \xi + c_{10} \delta \gamma \epsilon \xi + c_1 \beta \alpha
+ c_2 \beta \alpha \delta \gamma + c_5 \delta \gamma + c_6 \delta \gamma \beta \alpha + c_{11} \epsilon \xi
+ c_{12} \epsilon \xi \delta \gamma $ = $ (c_3 + c_1) \beta \alpha + (c_7 + c_5) \delta \gamma + (c_9 + c_{11})
\epsilon \xi + (c_4 + c_2 + c_7 + c_5 + c_{10} + c_{12}) \delta \gamma \beta \alpha $= $(c_3 + c_1 + c_9 + c_{11}) \beta \alpha
+ (c_7 + c_5 + c_9 + c_{11}) \delta \gamma + (c_4 + c_2 + c_7 + c_5 + c_{10} + c_{12}) \delta \gamma \beta \alpha.$

Hence, $fA_2$ is given by
$$fA_2(e_1 \otimes_{f^2_1} e_1) = d_1 \alpha \beta,$$
$$fA_2(e_2 \otimes_{f^2_2} e_2)= d_2 \xi \delta \gamma \epsilon,$$
$$fA_2(e_3 \otimes_{f^2_3} e_3)= d_3 \gamma \beta \alpha \delta,$$
$$fA_2(e_4 \otimes_{f^2_4} e_4) = d_4 \beta \alpha + d_5 \delta \gamma + (d_1 + d_2) \delta \gamma \beta \alpha,$$
for some $d_1, \ldots, d_5 \in K$.
  So $\dim\,{\Im}\,d_2 = 5$.

Now we determine ${\Ker}\,d_3$. Let $h \in {\Ker}\,d_3$, so  $h \in
{\Hom}(Q^2, \L)$ and $d_3h = 0$. Let $h: Q^2 \rightarrow \L$ be given by

$$h(e_1 \otimes_{f^2_1} e_1) = c_1 e_1 + c_2 \alpha \delta \gamma \beta,$$
$$h(e_2 \otimes_{f^2_2} e_2) = c_3 e_2 + c_4 \xi \delta \gamma \epsilon,$$
$$h(e_3 \otimes_{f^2_3} e_3) = c_5 e_3 + c_6 \gamma \beta \alpha \delta \mbox{ and }$$
$$h(e_4 \otimes_{f^2_4} e_4) = c_7 e_4 + c_8 \beta \alpha + c_9 \delta \gamma + c_{10}  \beta \alpha \delta \gamma,$$
for some $c_1, c_2, \ldots, c_{10} \in K$.

Then $hA_3(e_1 \otimes_{f^3_1} e_1) = h(e_1 \otimes_{f^2_1}
e_1) \alpha \delta \gamma \beta + h(e_1 \otimes_{f^2_1}
e_1) \alpha \beta - \alpha \delta \gamma \beta h(e_1 \otimes_{f^2_1} e_1) - \alpha \beta h(e_1 \otimes_{f^2_1} e_1)$
$= c_1 \alpha \delta \gamma \beta + c_1 \alpha \beta - c_1 \alpha \delta \gamma \beta - c_1 \alpha \beta = 0,$

In a similar way and recalling that $\kar K = 2$, we can show that $hA_3(e_2 \otimes_{f^3_2} e_2) = 0$
and $hA_3(e_3 \otimes_{f^3_3} e_3) = 0.$

Finally, $hA_3(e_2 \otimes_{f^3_4} e_2) = h(e_4 \otimes_{f^2_4} e_4) \beta \alpha \delta \gamma +
 h(e_4 \otimes_{f^2_4} e_4) \epsilon \xi \delta \gamma
- \epsilon h(e_2 \otimes_{f^2_2} e_2) \xi \delta \gamma - \delta h(e_3 \otimes_{f^2_3} e_3) \gamma \beta \alpha - \delta h(e_3 \otimes_{f^2_3} e_3)
\gamma \epsilon \xi - \delta \gamma h(e_4 \otimes_{f^2_4} e_4) \beta \alpha - \delta \gamma h(e_4 \otimes_{f^2_4} e_4) \epsilon \xi
- \beta \alpha h(e_4 \otimes_{f^2_4} e_4) \delta \gamma - \beta \alpha \delta h(e_3 \otimes_{f^2_3} e_3) \gamma - \delta \gamma \epsilon \xi
h(e_4 \otimes_{f^2_4} e_4) - \delta \gamma \beta \alpha h(e_4 \otimes_{f^2_4} e_4)$
$ = c_7 \beta \alpha \delta \gamma  + c_7 \epsilon \xi \delta \gamma - c_3 \epsilon \xi \delta \gamma - c_5 \delta
\gamma \beta \alpha  - c_5 \delta \gamma \epsilon \xi - c_7 \delta \gamma \beta \alpha - c_7 \delta \gamma \epsilon \xi
- c_7 \beta \alpha \delta \gamma - c_5 \delta \gamma \beta \alpha - c_7 \delta \gamma \epsilon \xi - c_7 \delta \gamma \beta \alpha$
$= (c_7 - c_3 - c_5) \epsilon \xi \delta \gamma.$ As $h \in {\Ker}\,d_3$ we have $c_7 = c_3 + c_5.$

Thus $h$ is given by
$$h(e_1 \otimes_{f^2_1} e_1) = c_1 e_1 + c_2 \alpha \delta \gamma \beta,$$
$$h(e_2 \otimes_{f^2_2} e_2) = c_3 e_2 + c_4 \xi \delta \gamma \epsilon,$$
$$h(e_3 \otimes_{f^2_3} e_3) = c_5 e_3 + c_6 \gamma \beta \alpha \delta \mbox{ and }$$
$$h(e_4 \otimes_{f^2_4} e_4) = (c_3 + c_5) e_4 + c_8 \beta \alpha + c_9 \delta \gamma
+ c_{10}  \beta \alpha \delta \gamma.$$
Hence $\dim\,{\Ker}\,d_3 = 9.$

Therefore, $\dim\,{\HH}^2({\mathcal A}_2) = \dim\,{\Ker}\,d_3 - \dim\,{\Im}\,d_2 = 9 - 5
= 4.$
\end{proof}

\section{${\HH}^2({\mathcal A}_1)$}\label{ch4}
In this section we determine ${\HH}^2({\mathcal A}_1)$ for the standard
algebra ${\mathcal A}_1$.

\begin{thm}
For the standard algebra ${\mathcal A}_1$ with $\kar K = 2$, we have $\dim\,{\HH}^2({\mathcal A}_1) = 3.$
\end{thm}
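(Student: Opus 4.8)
The plan is to mirror exactly the computation carried out for ${\mathcal A}_2$, applying the same projective resolution of \cite{GS} but now with the minimal generating set $f^2$ for ${\mathcal A}_1$ given in Proposition \ref{prop2}. Since ${\mathcal A}_1$ and ${\mathcal A}_2$ share the same quiver ${\mathcal Q}$ and differ only in the first relation ($f^2_1 = \alpha\beta$ rather than $\alpha\beta - \alpha\delta\gamma\beta$), I expect the computation of $\Im\,d_2$ to be essentially identical, while the computation of $\Ker\,d_3$ will differ precisely because the set $f^3$ must be recomputed for the standard relations, and this is where the discrepancy in dimensions should emerge.

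First I would compute a compatible set $f^3$ following \cite{GSZ}, writing each $f^3_i$ both as $\sum f^2_j p_j$ and as $\sum q_j f^2_j r_j$. For ${\mathcal A}_1$ the relation $f^2_1 = \alpha\beta$ is simpler, so I anticipate the overlap element at vertex $1$ will take the form $f^3_1 = f^2_1\alpha\delta\gamma\beta$ or similar, and analogous uniform elements $f^3_2, f^3_3, f^3_4$ at the other vertices, each expressed through the resolution data. Next I would describe a general $f \in \Hom(Q^1, \L)$ by its values on the six arrow-summands (as linear combinations of the appropriate basis paths in each $e_{\mo(a)}\L e_{\mt(a)}$), then apply $A_2$ to obtain $fA_2 = d_2 f$ on each of the four summands $e_i \otimes_{f^2_i} e_i$ of $Q^2$. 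I would collect the coefficients and read off $\dim\,\Im\,d_2$; I expect this to again be $5$, since the relevant relations entering $A_2$ are unchanged in form.

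Then I would parametrize a general $h \in \Hom(Q^2, \L)$ by its values on the four summands of $Q^2$ — each value being a scalar multiple of the idempotent $e_i$ plus scalar multiples of the socle paths at vertex $i$ — and compute $d_3 h = hA_3$ on each summand $e_i \otimes_{f^3_i} e_i$ of $Q^3$, using the formula for $A_3$ from \cite{GS}. Imposing $d_3 h = 0$ will produce linear conditions on the scalar parameters; the number of free parameters remaining gives $\dim\,\Ker\,d_3$, and then $\dim\HH^2({\mathcal A}_1) = \dim\,\Ker\,d_3 - \dim\,\Im\,d_2$. To land on the asserted value $3$, I expect $\dim\,\Ker\,d_3 = 8$, meaning the standard relation $f^2_1 = \alpha\beta$ forces one additional constraint (or removes one free socle parameter) compared with the non-standard case where $\alpha\delta\gamma\beta$ was tied into $f^2_1$.

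The main obstacle will be the correct determination of the set $f^3$ for ${\mathcal A}_1$: because these uniform elements are defined via the minimal resolution of $\L/\rrad$ and must simultaneously admit both the $\sum f^2_i p_i$ and $\sum q_i f^2_i r_i$ factorizations with $s_z \in I$, getting the precise paths right is delicate, and any error propagates directly into the $A_3$ formula and hence into the kernel conditions. A secondary subtlety is the careful bookkeeping of the socle of each indecomposable projective of ${\mathcal A}_1$ — identifying exactly which paths span $e_i \L e_i$ so that the parametrization of $h$ is both correct and non-redundant. With $f^3$ in hand, the remaining algebra is routine (though lengthy) arithmetic in characteristic $2$, and the difference $8 - 5 = 3$ should follow, establishing via Corollary \ref{cor} that $\dim\HH^2({\mathcal A}_1) \neq \dim\HH^2({\mathcal A}_2)$ and hence that the two algebras are not derived equivalent.
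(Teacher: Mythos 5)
Your outline follows exactly the paper's method: the same Green--Snashall resolution, the same parametrization of $\Hom(Q^1,\L)$ and $\Hom(Q^2,\L)$, and your predicted intermediate dimensions ($\dim\Im\,d_2 = 5$, $\dim\Ker\,d_3 = 8$) are precisely what the paper obtains. However, as a proof there is a genuine gap: the one step that actually distinguishes ${\mathcal A}_1$ from ${\mathcal A}_2$ --- namely the determination of $f^3$ and the computation of $hA_3$ on each summand of $Q^3$ --- is never carried out. Instead you infer $\dim\Ker\,d_3 = 8$ backward from the value $3$ you are trying to establish (``to land on the asserted value $3$, I expect\dots''), which makes the argument circular exactly where it needs content.

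Two of your anticipations would also need correcting once the computation is done, which shows why the expectation cannot substitute for it. First, the paper does not take $f^3_1 = f^2_1\alpha\delta\gamma\beta$; it takes $f^3_1 = f^2_1\alpha\epsilon\xi\beta$, with second factorization $f^3_1 = \alpha f^2_4\epsilon\xi\beta + \alpha\delta\gamma f^2_4\beta + \alpha\delta\gamma\beta f^2_1 + \alpha\delta f^2_3\gamma\beta + \alpha\epsilon f^2_2\xi\beta$ running through all four relations, and it is this factorization (the $q_i$ and $r_i$) that feeds into $A_3$; an error here changes the kernel conditions. Second, the two constraints cutting $\dim\Ker\,d_3$ from $10$ to $8$ do not ``remove a free socle parameter'': the paper finds $hA_3(e_1\otimes_{f^3_1}e_1) = (-c_5-c_3)\alpha\delta\gamma\beta$, $hA_3(e_2\otimes_{f^3_2}e_2) = (-c_1-c_5)\xi\delta\gamma\epsilon$, $hA_3(e_3\otimes_{f^3_3}e_3) = (-c_1-c_3)\gamma\beta\alpha\delta$, and $hA_3 = 0$ on the fourth summand, so the conditions are $c_1 = c_3 = c_5$, tying together the coefficients of the idempotents $e_1, e_2, e_3$ while every socle coefficient stays free. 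Note that this is the mirror image of the ${\mathcal A}_2$ case (there the first three overlaps gave $0$ and the fourth gave the single condition $c_7 = c_3 + c_5$), so a computation that merely ``mirrors'' the ${\mathcal A}_2$ one, as you propose, would not by itself locate where the extra constraint comes from; the $f^3$ data and the $A_3$ arithmetic must be redone from scratch for ${\mathcal A}_1$.
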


\begin{proof}
The set $f^2$ of minimal relations was given in Proposition \ref{prop2}.
Following \cite{GS, GSZ}, we may choose the set $f^3$ to consist of the
following elements:
$$\{f^3_1, f^3_2,f^3_3, f^3_4\},\mbox{ where }$$
$$\begin{array}{l c l c l l l l l l l l l }
f^3_1 &=& f^2_1 \alpha \epsilon \xi \beta  \\
      &=& \alpha f^2_4 \epsilon \xi \beta
      &+& \alpha \delta \gamma f^2_4 \beta
      &+& \alpha \delta \gamma \beta f^2_1
      &+& \alpha \delta f^2_3 \gamma \beta
      &+& \alpha \epsilon f^2_2 \xi \beta
     &\in& e_1 K{\mathcal Q}e_1, \\

f^3_2 &=& f^2_2 \xi \delta \gamma \epsilon \\
      &=& \xi f^2_4 \delta \gamma \epsilon
      &+& \xi \beta \alpha f^2_4 \epsilon
      &+&  \xi \beta f^2_1 \alpha \epsilon
      &+& \xi \beta \alpha \epsilon f^2_2
      &+& \xi \delta f^2_3 \gamma \epsilon
      &\in& e_2 K{\mathcal Q}e_2, \\

f^3_3 &=& f^2_3 \gamma \epsilon \xi \delta \\
      &=& \gamma f^2_4 \epsilon \xi \delta
      &+& \gamma \beta \alpha f^2_4 \delta
      &+& \gamma \beta f^2_1 \alpha \delta
      &+& \gamma \beta \alpha \delta f^2_3
      &+& \gamma \epsilon f^2_2 \xi \delta
      &\in& e_3 K{\mathcal Q}e_3, \\

f^3_4 &=& f^2_4 \beta \alpha \delta \gamma \\
      &=& \beta f^2_1 \alpha \delta \gamma
      &+& \delta f^2_3 \gamma \epsilon \xi
      &+& \epsilon f^2_2 \xi \delta \gamma
      &+& \delta \gamma f^2_4 \epsilon \xi
      &+& \epsilon \xi f^2_4 \delta \gamma \\
      &+& \delta \gamma \beta f^2_1 \alpha
      &+& \delta \gamma \epsilon f^2_2 \xi
      &+& \epsilon \xi \delta f^2_3 \gamma
      &+& \delta \gamma \beta \alpha f^2_4
   & & &\in& e_4 K{\mathcal Q}e_4.\\
\end{array}$$
Thus (writing $\Lambda$ for ${\mathcal A}_1$) the projective
 $Q^3 = \bigoplus_{y \in f^3} \L \mo(y) \otimes \mt(y) \L$ $= (\L e_1 \otimes e_1\L)
\oplus (\L  e_2 \otimes e_2\L) \oplus (\L e_3 \otimes e_3\L) \oplus (\L  e_4\otimes e_4 \L) $.

Again, ${\HH}^2(\L) = {\Ker}\,d_3 / {\Im}\,d_2$. First we will find
${\Im}\,d_2$. Let $f \in {\Hom}(Q^1, \L)$ and so write
$$f(e_1 \otimes_{\alpha} e_4) = c_1 \alpha + c_2 \alpha \delta \gamma, \hspace{1cm} f(e_4
\otimes_{\beta} e_1) = c_3 \beta + c_4 \delta \gamma \beta,$$
$$f(e_3 \otimes_{\gamma} e_4) = c_5 \gamma + c_6 \gamma \beta \alpha, \hspace{1cm} f(e_4 \otimes_{\delta} e_3) =
 c_7 \delta + c_8 \beta \alpha \delta,$$
$$f(e_4 \otimes_{\epsilon} e_2) = c_9 \epsilon + c_{10} \delta \gamma \epsilon
\hspace{1cm} \mbox{and }
f(e_2 \otimes_{\xi} e_4) = c_{11} \xi + c_{12} \xi \delta \gamma,$$
where $c_1, c_2, c_3, c_4, \ldots, c_{12} \in K.$
Now we find $fA_2 = d_2f$. We have
$fA_2(e_1 \otimes_{f^2_1} e_1) = f(e_1 \otimes_{\alpha} e_4)\beta
$+$ \alpha f(e_4 \otimes_{\beta} e_1)
= c_2 \alpha \delta \gamma \beta
+ c_4 \alpha \delta \gamma \beta
= (c_2 + c_4 ) \alpha \delta \gamma \beta$.

Also $fA_2(e_2 \otimes_{f^2_2} e_2) = f(e_2 \otimes_{\xi} e_4)\epsilon + \xi f(e_4 \otimes_{\epsilon} e_2)
= (c_{12} + c_{10}) \xi \delta \gamma \epsilon$ and
$fA_2(e_3 \otimes_{f^2_3} e_3) = f(e_3 \otimes_{\gamma} e_4)\delta + \gamma f(e_4 \otimes_{\delta} e_3)
= (c_6 + c_8) \gamma \beta \alpha \delta$.

And $fA_2(e_4 \otimes_{f^2_4} e_4) = f(e_4 \otimes_{\beta} e_1)\alpha + f(e_4 \otimes_{\delta} e_3) \gamma
+ f(e_2 \otimes_{\epsilon} e_4) \xi + \beta f(e_1 \otimes_{\alpha} e_4) + \delta f(e_3 \otimes_{\gamma} e_4)
+ \epsilon f(e_2 \otimes_{\xi} e_4) $= $(c_3 + c_9 + c_1 + c_{11}) \beta \alpha
+ (c_7 + c_9 + c_5 + c_{11}) \delta \gamma + (c_4 + c_8 + c_{10} + c_2 + c_6 + c_{12}) \delta \gamma \beta \alpha$.
Hence, $fA_2$ is given by
$$fA_2(e_1 \otimes_{f^2_1} e_1) = d_1 \alpha \beta,$$
$$fA_2(e_2 \otimes_{f^2_2} e_2)= d_2 \xi \delta \gamma \epsilon,$$
$$fA_2(e_3 \otimes_{f^2_3} e_3)= d_3 \gamma \beta \alpha \delta,$$
$$fA_2(e_4 \otimes_{f^2_4} e_4) = d_4 \beta \alpha + d_5 \delta \gamma
+ (d_1 + d_2 + d_3) \delta \gamma \beta \alpha,$$
for some $d_1, \ldots, d_5 \in K$.
  So $\dim\,{\Im}\,d_2 = 5$.

Now we determine ${\Ker}\,d_3$. Let $h \in {\Ker}\,d_3$, so  $h \in
{\Hom}(Q^2, \L)$ and $d_3h = 0$. Let $h: Q^2 \rightarrow \L$ be given by

$$h(e_1 \otimes_{f^2_1} e_1) = c_1 e_1 + c_2 \alpha \delta \gamma \beta,$$
$$h(e_2 \otimes_{f^2_2} e_2) = c_3 e_2 + c_4 \xi \delta \gamma \epsilon,$$
$$h(e_3 \otimes_{f^2_3} e_3) = c_5 e_3 + c_6 \gamma \beta \alpha \delta \mbox{ and }$$
$$h(e_4 \otimes_{f^2_4} e_4) = c_7 e_4 + c_8 \beta \alpha + c_9 \delta \gamma
+ c_{10}  \beta \alpha \delta \gamma,$$
for some $c_1, c_2, \ldots, c_{10} \in K$.

It can be easily shown that $hA_3(e_1 \otimes_{f^3_1} e_1) = (-c_5-c_3) \alpha \delta \gamma \beta.$
As $h \in {\Ker}\,d_3$ and $\kar K =2$ we have $c_5 = c_3$,
and that $hA_3(e_2 \otimes_{f^3_2} e_2) = (-c_1 - c_5) \xi \delta \gamma \epsilon$ so that $c_1 = c_5$.
Similarly, $hA_3(e_3 \otimes_{f^3_3} e_3) = (-c_1 - c_3)\gamma \beta \alpha \delta$ so that $c_1 = c_3$.
Finally, we have $hA_3(e_2 \otimes_{f^3_4} e_2) = 0.$

Thus $h$ is given by
$$h(e_1 \otimes_{f^2_1} e_1) = c_1 e_1 + c_2 \alpha \delta \gamma \beta,$$
$$h(e_2 \otimes_{f^2_2} e_2) = c_1 e_2 + c_4 \xi \delta \gamma \epsilon,$$
$$h(e_3 \otimes_{f^2_3} e_3) = c_1 e_3 + c_6 \gamma \beta \alpha \delta \mbox{ and }$$
$$h(e_4 \otimes_{f^2_4} e_4) = c_7 e_4 + c_8 \beta \alpha + c_9 \delta \gamma
+ c_{10}  \beta \alpha \delta \gamma.$$
Hence $\dim\,{\Ker}\,d_3 = 8.$

Therefore  $\dim\,{\HH}^2({\mathcal A}_1) = \dim\,{\Ker}\,d_3 - \dim\,{\Im}\,d_2 = 8 - 5
= 3.$
\end{proof}

Thus we have shown that $\dim\HH^2({\mathcal A}_1) \neq
\dim\HH^2({\mathcal A}_2)$. Hence these two algebras are not derived
equivalent. Now we state the main result of this paper.

\begin{cor}\label{cor}
For the finite dimensional algebras ${\mathcal A}_1$ and ${\mathcal A}_2$ over an algebraically closed field $K$ with $\kar K = 2$,
we have $\dim\HH^2({\mathcal A}_1) \neq \dim\HH^2({\mathcal A}_2)$. Hence these two algebras are not derived
equivalent.
\end{cor}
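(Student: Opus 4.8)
The plan is to deduce the corollary directly from the two preceding theorems together with the derived invariance of Hochschild cohomology, so there is very little left to do once those computations are in hand. The two theorems establish, under the standing hypothesis $\kar K = 2$ and over the same algebraically closed field $K$, that $\dim\HH^2(\mathcal{A}_1) = 3$ while $\dim\HH^2(\mathcal{A}_2) = 4$; both were obtained from the minimal projective bimodule resolution of \cite{GS}. From these two integers the strict inequality $\dim\HH^2(\mathcal{A}_1) \neq \dim\HH^2(\mathcal{A}_2)$ is immediate, so the only substantive point is the passage from this numerical inequality to the failure of a derived equivalence.

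For that step I would invoke the fundamental result, due to Rickard and Happel, that a derived equivalence between two finite dimensional algebras induces an isomorphism of their Hochschild cohomology groups in each degree, and in particular preserves every $\dim\HH^n$. The argument then runs by contraposition: if $\mathcal{A}_1$ and $\mathcal{A}_2$ were derived equivalent, the spaces $\HH^2(\mathcal{A}_1)$ and $\HH^2(\mathcal{A}_2)$ would be isomorphic as $K$-vector spaces and hence of equal dimension, contradicting $3 \neq 4$. Therefore no derived equivalence can exist, which is exactly the assertion of the corollary.

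The argument has essentially no internal obstacle: all of the genuine work lies in the bimodule cohomology calculations already completed in the two theorems. The only points requiring care are that the derived invariance be applied to the full Hochschild cohomology (not, say, to the centre alone or to Hochschild homology), and that both invariants be computed over the same field $K$, so that the dimensions being compared are genuinely comparable. I would also record that this is precisely the strategy, referenced in the introduction via \cite{D}, by which $\HH^2$ was used to separate a standard from a non-standard algebra in the finite representation type setting; here the same mechanism resolves the question of Holm and Skowro\'nski in the negative.
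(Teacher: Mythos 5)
Your proposal is correct and follows exactly the paper's route: the corollary is deduced by combining the two theorems ($\dim\HH^2({\mathcal A}_1)=3$ and $\dim\HH^2({\mathcal A}_2)=4$) with the derived invariance of Hochschild cohomology, which the paper invokes implicitly in the remark preceding the corollary. Your explicit citation of the Rickard--Happel invariance result merely makes precise a step the paper leaves tacit; otherwise the arguments are identical.
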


\end{document}